\newtheorem{thmA}{Theorem}
\newtheorem{lemA}[thmA]{Lemma}
\theoremstyle{definition}
\newtheorem{defnA}[thmA]{Definition}
\numberwithin{equation}{section}
\author[J. J. Colomina-Almi\~nana]{Juan J. Colomina-Almi\~nana}
\address{Mills College\\
Northeastern University\\
5000 MacArthur Ave., Oakland\\
CA 94613, United States of America}
\email{j.colomina-alminana@norhteastern.edu}
\author[P. R. Stinga]{Pablo Ra\'ul Stinga}
\address{Department of Mathematics\\
Iowa State University\\
396 Carver Hall, Ames\\
IA 50011, United States of America}
\email{stinga@iastate.edu}
\thanks{The order of the authors, though merely alphabetical, has been decided at random in accordance
to Boolos, that is, by flipping the coin located inside of Random's head. Research partially supported by Big 12 Faculty Fellowship Program}
\keywords{Hardest Logic Puzzle Ever, Admissible Question, Law of Excluded Middle}
\begin{document}

\title[Boolos' Hardest Logic Puzzle Ever]{Boolos' Hardest Logic Puzzle Ever \\
Can Be Solved in No Less than Three Admissible Questions: Axiomatic Framework and Rigorous Proof}

\begin{abstract}
A formal axiomatic mathematical framework for Boolos' Hardest Logic Puzzle Ever is presented
and two theorems about its solvability are proved.
By strictly following Boolos' instructions
(in particular, the requirement that all gods are always obliged to answer),
the novel concept of \textit{admissible questions} for the puzzle is introduced. 
It is then rigorously proved that Boolos' original puzzle can be solved,
in an absolute deterministic way, in no less than three yes-no admissible questions.
However, this does not mean that one could solve it in less than three admissible questions by just pure \emph{chance}.
Hence, such probabilities are computed here as well.
\end{abstract}

\maketitle

\section{Introduction}

The main aim of this paper is to introduce a new formal, axiomatic mathematical framework
for solving Boolos' Hardest Logic Puzzle Ever. The key novel concept is that of \emph{admissible question}
for the puzzle. We then rigorously prove that, within this setting, the puzzle can
be deterministically solved in no less than three admissible questions. The probabilities of solving
the puzzle by chance in less than three admissible questions are also computed.

In particular, this work emphasizes the importance of what Boolos
actually suggested as the proper way of addressing ``The Hardest Logic Puzzle Ever,'' whose origin he
attributes to Raymond Smullyan (cf.~\cite{Boolos}). Regarding this,
we remind the reader that, according to Boolos, the significance
of the puzzle is the following:

\smallskip

\begin{quote}
{\small``There is a law of logic called ``the law of excluded middle,'' according to which either $X$ is true or not-$X$ is true,
for any statement $X$ at all. (``The law of non-contradiction'' asserts that statements $X$ and not-$X$ aren't both true.)
Mathematicians and philosophers have occasionally attacked the idea that excluded middle is a logically valid law.
We can't hope to settle the debate here, but can observe that our solution to puzzle 1 made essential use of excluded middle,
exactly when we said ``Whether the middle card is an ace or not\ldots'' It is clear from The Hardest Logic Puzzle Ever,
and even more plainly from puzzle 1, that our ability to reason about alternative possibilities,
even in everyday life, would be almost completely paralyzed were we to be denied the use of the law of excluded middle.''
\cite[p.~65]{Boolos}}
\end{quote}

\smallskip
 
What this paragraph tells us is, precisely, that if one wants to obtain a valid solution according to the rules posed
by The Hardest Logic Puzzle Ever in the spirit and form professed by Boolos, one has to apply the Law of Excluded Middle.
On the other hand, if one primarily employs the Law of Non-Contradiction, some scenarios immediately
follow that invalidate a solution according to Boolos' guidelines. It thus follows that, in order to properly solve
the puzzle according to Boolos' instructions and intentions, a precise systematic framework must be established
in which the problem can be unambiguously and rigorously posed.

Our work then emphasizes that, if we were to preserve the original spirit of Boolos' work \cite{Boolos}, one
should take at heart this \textit{conservative} claim regarding the classical laws of logic to solve his puzzle.
It becomes obvious that one needs to assume the feasibility of the Law of Excluded Middle.
As it will be shown, it follows that we must restrict ourselves to a set of
\textit{admissible} questions to pose to the gods. The novel concept of admissible
question, that we introduce in this paper for the first time, is presented in Definition \ref{admissible}.
This new notion will allow us
to analyze the philosophical implications of the Hardest Logic Puzzle Ever.

Let us consider again Boolos' quote from \cite{Boolos} mentioned at the beginning.
We want to remind that the Law of Excluded Middle
tells us that in scenarios where there are three possibilities, $XYZ$, when something is identified as non-$X$, there is an open possibility that that same thing could be either $Y$ or $Z$. Keeping this in mind, let us think again about puzzle 1 in Boolos' original article:

\smallskip

\begin{quote}
{\small Puzzle 1: Noting their locations, I place two aces and a jack face down on a table, in a row; you do not see which card is placed where. Your problem is to point to one of the three cards and then ask me a single yes-no question, from the answer to which you can, with certainty, identify one of the three cards as an ace. If you have pointed to one of the aces, I will answer your question truthfully. However, if you have pointed to the jack, I will answer your question yes or no, completely at random. \cite[p.~63]{Boolos}}
\end{quote}

\smallskip
 
Besides the fact that it seems to be an election between two alternatives (Ace or Jack), Boolos points out that it is actually indifferent which is the identity of the card that was placed in the middle. This makes us believe that one should treat this as a case where one
has to find out the identities of three different individuals/tokens (Ace1, Ace2 and Jack) if one wants to have an absolute answer to the puzzle. As we said before, this is due to the fact that when the ordinary reasoning that follows from the Law of Excluded Middle
is applied, one can only find out whether the selected card in the first place is \textit{an} Ace or \textit{the} Jack, and from there one can then deduce the rest. Formally, one finds out either $J$ (which then gives options $A1$ and $A2$ as only follow-ups) or non-$J$
(which then gives $J$ and $A2$ as the only options).

We can think about this on a related case: From the fact that ``I have purchased a car painted in a primary color and my new car is not blue,'' it does not follow that the car is red, since it is a plausible possibility that the color could be yellow (if you are into this kind of flashy cars, of course).

Keeping all of this in mind, the axiomatic mathematical approach presented in this article will reinforce Boolos' \cite{Boolos} 
(and Roberts' \cite{Roberts}) original intuitions regarding the laws of logic and our ordinary ways of reasoning. To do so we deepen into some arguments that, we believe, are crucial for understanding the importance of Boolos' purpose when formulating The Hardest Logic Puzzle Ever and his three-question solution. All of this, combined,
shall help us confirm our suggestion that it is not possible to have an absolute solution to the puzzle in less than
three \emph{admissible} questions.

\section{The Hardest Logic Puzzle Ever}

George Boolos \cite[p.~62]{Boolos} presents the Hardest Logic Puzzle Ever. The puzzle goes like this (boldface is ours):

\smallskip

\begin{quote}
{\small Three gods $A$, $B$, and $C$ are called, in some order, True, False, and Random.
True always speaks truly, False always speaks falsely, but whether Random speaks truly
or falsely is a completely \textit{random} matter.
Your task is to determine the identities of $A$, $B$, and $C$ \textbf{by asking three yes-no questions};
each question must be put to exactly one god. \textbf{The gods} understand English,
but \textbf{will answer all questions} in their own language, in which the words for
``yes'' and ``no'' are ``da'' and ``ja,'' in some order. \textit{You do not know which word means which}.}
\end{quote}

\smallskip

Boolos also gives the following guidelines (boldface is ours):

\smallskip

\begin{quote}
{\small\begin{enumerate}[$(a)$]
\item It could be that some god gets asked more than one question (and hence that some god is not asked any questions at all).
\smallskip
\item \textbf{What the second question is, and to which god it is put, may depend on the answer to the first question.}
(And, of course, similarly for the third question).\smallskip
\item Whether \textbf{Random} speaks truly or not should be thought of as depending on the flip of a coin hidden in his brain:
\textbf{If the coin comes down heads, he speaks truly; if tails, falsely.}\smallskip
\item Random \textbf{will answer} ``Da'' or ``Ja'' \textbf{when asked} any yes-no question.
\end{enumerate}}
\end{quote}

\smallskip

We stress the fact that Boolos instructs us to solve the puzzle with three yes-no questions.
It is also a requirement that all the gods, including Random, will always answer.
Moreover, it is implied in Boolos' solution that the meaning of ``Da'' and ``Ja'' is irrelevant to solve the puzzle
(in other words, semantics does no determine our ontology), and, at the same time,
that all of this would be in virtue of the irreducible fundamentality of the Law of Excluded Middle.\footnote{Roberts
\cite{Roberts} reinforces Boolos' instructions and solution.
Indeed, he presents another three-question solution to the puzzle,
where the gods always reply either ``Da'' or ``Ja,'' and the use of the Law of Excluded Middle is crucial.}

We prove that Boolos' puzzle can be solved,
in an absolute deterministic way, in no less than three yes-no questions, see Theorem \ref{thm:primero}.
Towards this end, and to establish an axiomatic formal
system to unambiguously define the puzzle and prove this claim in a rigorous way,
we introduce the concept of \emph{admissible question}, see Definition \ref{admissible}.
Nevertheless, this does not mean that one could not be lucky enough to find the gods' identities
in less than three questions. It is for this reason that, second, we shall compute the probabilities of solving Boolos'
puzzle by asking no questions (!), and with one and two yes-no (admissible) questions, respectively, as well,
see Theorem \ref{thm:probabilities}.

Before presenting our setting and results, however, we need to make the following considerations
regarding Boolos' instructions that will guide and justify our analysis.

Boolos' original formulation requires that the gods ``\textbf{will answer} all questions in their own language.''
In other words, this sentence entails the rule that the gods are obliged to answer.
This in particular implies that not every yes-no question may be asked,
as it comes out of the nature of the gods themselves.
To clarify this point further, consider, for example, the question
\begin{equation}\tag{$Q$}\label{Q}
\begin{aligned}
\hbox{``Are you going to answer to this question}\quad \\ \hbox{with a word that means `no' in your language?''}
\end{aligned}
\end{equation}
This is a question that neither god True nor Random speaking Truly can answer without violating their nature
(for Random it would mean to reverse the outcome of the coin in its head).
Indeed, suppose that we ask \eqref{Q} to god True $T$ and, for the sake of simplicity,
$T$ responds in English. If $T$ replies `yes'
then $T$ is speaking falsely, because it is claiming that indeed will answer to the question with the word `no'
but it is actually saying `yes', a contradiction to its nature. On the other hand, if $T$ answers `no' then again $T$ is speaking
falsely, because it is claiming that will not answer the question with the word `no', that is, it will answer `yes',
but it is actually saying `no', a contradiction with its nature again. Therefore, $T$ will not be able to answer
to \eqref{Q} without violating its nature.
For the case of god Random speaking Truly, the same
reasoning applies. However, \eqref{Q} would be admissible as directed to god False or god Random speaking Falsely.
In fact, say we ask \eqref{Q} to god False $F$ and, for the sake of simplicity, $F$ responds in English.
By following the analysis we did before for $T$, we immediately see that there is no contradiction when  $F$ 
responds either `yes' or `no'. Both answers are lies, and that is perfectly in accordance with the nature of $F$.
Therefore, $F$ will answer either `yes' or `no' (that is, ``Da'' or ``Ja'' in its own language) to question \eqref{Q}.
Posing question \eqref{Q} to god Random speaking Falsely yields no contradiction as well.
Such a basic example shows that there must be a subclass of the class of all yes-no questions that is \textit{admissible}.
Hence we introduce the following notion.

\begin{defnA}\label{admissible}
A question $Q$ is \textit{admissible} for god $A$ if and only if, by definition, $A$ will answer either
``Da'' or ``Ja'' to $Q$.
\end{defnA}

It is important to observe that The Hardest Logic Puzzle Ever already has a
universal set of permissible questions, namely, the class of \textit{all}
yes-no questions. However, and in order to keep Boolos' spirit and instructions,
the class of possible questions must be tailored as it has to be
dynamic and precise in relation to the epistemic space.
Notice as well that the set of admissible questions does not need to be static.
In fact, Boolos remarks in guideline $(b)$ that ``What the second question is, and to which god it is put,
\textbf{may depend on the answer to the first question}'' (boldface is ours). As our proof of Theorem \ref{thm:primero}
will show, either a static or a dynamic set of admissible questions will still yield the same outcome: Three yes-no
admissible questions,
and no less, are needed to solve the puzzle with absolute certainty.
Furthermore, our proof demonstrates that the necessity of a three-admissible question solution
to deterministically solve the puzzle is independent of what we actually ask,
the meaning of ``Da'' and ``Ja,'' and the triviality of the distinction between truth-tellers and liars.

\section{The three-admissible question solution to The Hardest Logic Puzzle Ever}

As anticipated, we prove the following:

\begin{thmA}\label{thm:primero}
Boolos' Hardest Logic Puzzle Ever can be deterministically solved in no less than three admissible
yes-no questions.
\end{thmA}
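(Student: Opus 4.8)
The plan is to establish a counting (information-theoretic) lower bound, refined to account for the admissibility constraint and Random's behavior. The underlying state space consists of the $3! = 6$ possible assignments of the labels True, False, Random to the gods $A$, $B$, $C$; a deterministic solution must, upon reading off the sequence of answers, map to a single assignment with certainty. First I would observe that each admissible question elicits exactly one of two responses, ``Da'' or ``Ja,'' so a strategy that asks $k$ questions can produce at most $2^k$ distinct answer-sequences. Since $2^1 = 2 < 6$ and $2^2 = 4 < 6$, a purely naive count already rules out one- and two-question solutions for distinguishing six states. The subtlety Boolos hides, and which I must confront, is that the unknown meaning of ``Da''/``Ja'' and Random's coin mean that the map from state to answer is neither injective nor even deterministic in general, so the clean count $2^k \ge 6$ needs justification rather than assertion.

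Next I would make precise why the admissibility of the questions (Definition \ref{admissible}) does not rescue a two-question solution. The key structural point, illustrated by the paper's analysis of question \eqref{Q}, is that admissibility only guarantees a god \emph{produces} an answer in $\{\text{Da},\text{Ja}\}$; it does not shrink the number of output symbols, nor does it tell the questioner which symbol means ``yes.'' I would formalize the effect of the unknown semantics as a fixed but unknown bijection $\sigma\colon\{\text{Da},\text{Ja}\}\to\{\text{yes},\text{no}\}$, and argue that for the solution to be \emph{deterministic} it must succeed under both choices of $\sigma$ simultaneously. This doubles the effective set of scenarios the strategy must separate, or equivalently halves the discriminating power of each answer, which is exactly the obstruction that forces the third question even though $2^2 = 4$ is not astronomically smaller than $6$.

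I would then handle Random's contribution carefully, since this is where the deterministic requirement bites hardest. Under guideline $(c)$, Random answers according to a hidden coin, so any answer it gives is consistent with either truth-telling or lying; consequently a question addressed to Random conveys no reliable bit toward fixing the global assignment. I would argue that in any two-question strategy there exists a branch in which at least one question is effectively wasted on Random (or renders some state indistinguishable from another because Random's coin can reproduce the answer of a genuine truth-teller or liar), leaving strictly fewer than the six states separable. The cleanest way to carry this out is a case analysis on the decision tree: I enumerate, for a hypothetical two-question strategy, the at most four leaves, and exhibit for each strategy two distinct assignments in the same leaf under some coin outcome and some choice of $\sigma$, contradicting deterministic identification.

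The main obstacle I anticipate is precisely this adversarial step: proving that \emph{no} clever two-question tree works, rather than merely that a given one fails. Because guideline $(b)$ permits the second question to depend adaptively on the first answer, I cannot fix the questions in advance; I must reason about an arbitrary adaptive tree. The plan to overcome this is to show that after the first admissible question, regardless of which god it targets and regardless of the answer received, the remaining uncertainty still contains at least three mutually indistinguishable assignments once the adversary is free to choose Random's coin and the meaning $\sigma$ consistently; a single further binary answer then cannot resolve three possibilities. This reduces the whole theorem to a pigeonhole statement ($2 < 3$) applied at the second level of the tree, making the argument robust to the adaptivity allowed by Boolos' rules and independent, as the paper emphasizes, of what we actually ask and of the meaning of ``Da'' and ``Ja.''
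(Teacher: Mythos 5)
Your proposal is correct, and its core is the same information-theoretic counting idea the paper uses; the difference is in how the two-question case is closed. The paper invokes the Wheeler--Barahona lemma (Lemma \ref{lemma}: $N$ answers cannot distinguish $M>N$ possibilities) twice in sequence: once for a single question ($M=6$, $N=2$), and then for two questions via a scenario-specific claim -- namely that the first question cannot be guaranteed to cut the six scenarios down to two, because the interrogated god may be non-Random, in which case all of $S1$--$S4$ survive, so $M=4>2=N$ at the second stage. You instead run a global pigeonhole over the whole adaptive decision tree: an admissible two-question strategy has at most $2^2=4$ transcripts, each scenario (by admissibility, which rules out silence or exploding heads as a third symbol) realizes at least one transcript, and deterministic correctness forces distinct scenarios to have disjoint transcript sets, which is impossible since $6>4$; equivalently, your two-level pigeonhole (some first answer is consistent with at least three scenarios, and $2<3$) makes the same point locally. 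What your route buys is robustness: it handles the adaptivity allowed by guideline $(b)$ explicitly and needs no claim about \emph{which} scenarios survive the first answer, only \emph{how many} must, so it does not lean on the True/False symmetry or on Random's coin at all -- those appear in your write-up (the bijection $\sigma$, the wasted-question analysis) only as reassurance, not as load-bearing steps. What the paper's version buys is a sharper qualitative picture of \emph{why} the first question fails (the non-Random branch leaves four live scenarios, not merely three), which feeds directly into its later probabilistic analysis in Theorem \ref{thm:probabilities}. One caution: your remark that the unknown meaning of ``Da''/``Ja'' ``doubles the effective set of scenarios'' is unnecessary for, and slightly muddies, the count -- the bound $6>4$ already suffices without enlarging the state space, and conflating the two framings could invite an objection that you are double-counting.
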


\begin{proof}
According to Boolos \cite{Boolos} and Roberts \cite{Roberts}, the puzzle can deterministically be solved in
three admissible yes-no questions. Therefore, the only thing we must prove is that there are
no single-question or two-question deterministic solutions to the puzzle.

To restate the puzzle, the problem is to determine the identities of the three gods: $A$, $B$, and $C$, which are
$T=$ True, $F=$ False, and $R=$ Random, in some order. There are then 6 different possible identification scenarios,
$S1$--$S6$, see Table 1.

\begin{table*}
\begin{center}
 \begin{tabular}{||c | c | c | c||} 
 \hline
  & $A$ & $B$ & $C$ \\
 \hline\hline
 $S1$ & $T$ & $F$ & $R$ \\ 
 \hline
 $S2$ & $T$ & $R$ & $F$ \\
 \hline
 $S3$ & $F$ & $T$ & $R$ \\
 \hline
 $S4$ & $F$ & $R$ & $T$ \\
 \hline
 $S5$ & $R$ & $T$ & $F$ \\ 
 \hline
  $S6$ & $R$ & $F$ & $T$ \\
 \hline
\end{tabular}

\bigskip

\caption{Possible identification scenarios}
\end{center}
\end{table*}

First, we shall prove that there is no a single-question deterministic solution to the puzzle.
Without loss of generality, we address the first admissible yes-no question to god $A$.
With independence of what we actually ask, $A$ will always answer either ``Da'' or ``Ja.'' Independently
of what ``Da'' and ``Ja'' mean, the answer will not provide information enough to deterministically identify
the three gods at the same time.
The reason is given by the following result,
which follows at once from the fact that there is no one-to-one function from
a set of cardinality $M$ (possibilities) into a set of cardinality $N$ (answers), whenever $M>N$.\footnote{This
type of result in \cite{Wheeler-Barahona} is referred to as the ``Information Theory lemma.''}

\begin{lemA}\label{lemma}
If an admissible question has $N$ possible answers, these $N$ answers cannot distinguish $M>N$ different possibilities.
\end{lemA}

We have already specified that for our case there are 6 possible identification scenarios
($S1$--$S6$ above). Given that there are two possible answers,
$M=6$ and $N=2$. Therefore, by Lemma \ref{lemma}, we cannot distinguish the identities of all three gods altogether.

Second, to provide a two-question solution to the problem in a deterministic way,
in virtue of the same lemma, the first admissible yes-no question must reduce the remaining scenarios to no more than two.
However, the latter cannot deterministically be guaranteed. The reason is that there is always a chance that
$A$ is not-Random. In such a case, after the first question is answered, we are in a situation where
$A$ could be either True or False, so scenarios $S1$--$S4$ still remain.
Therefore, $M=4>2=N$. Notice that there is no admissible first question that
would directly determine the identity of $A$, since there are $M=3$ gods but
still $N=2$ possible answers to any admissible question. QED.
\end{proof}

In view of this proof, an important remark about the dynamics within the set of admissible question is in order.
Notice that the question \eqref{Q} we mentioned above
is not an admissible question as a first question to ask because we do not know the identity of any of the gods yet. However, it may happen that, after the first question is answered, we gain some information that will allow \eqref{Q} to become an admissible question. In this regard, our set of admissible questions can be dynamic in relation with our interaction with the gods, and not just static or absolute. Remarkably enough, our proof does not depend on the actual content of the questions that are being asked,
but depends only on the number of possible identification scenarios in relation with the number of possible answers to our questions.
To close the puzzle in a deterministic way or, which is the same, to match those numbers exactly, it does not
matter what we ask and how the set of admissible questions changes or not: We still need three yes-no admissible questions.

Theorem \ref{thm:primero} establishes that there exists a solution to Boolos puzzle in three admissible questions and no less.
Its proof is existential: It does not provide
an explicit algorithm to solve the puzzle (we do not even need that, as
algorithms were already found by Boolos \cite{Boolos} and Roberts \cite{Roberts}).
In the very interesting paper \cite{Rabern-Rabern}, Rabern and Rabern state that Boolos' original puzzle \emph{``is
no more difficult than [a] trivial puzzle''} that they present and then go
on to introduce and solve a modification of Boolos' puzzle. In particular, \cite{Rabern-Rabern}
does not provide a three-question algorithm to solve Boolos' original puzzle nor 
the ``trivial puzzle.'' However,
by using non-admissible questions like \eqref{Q} (see \cite[p.~109]{Rabern-Rabern}) and allowing
the gods to provide a third response to a question
(not only either ``yes'' or ``no'', see \cite[p.~109]{Rabern-Rabern}),
an algorithm is given for a two-non-admissible question solution
to Boolos' puzzle. In fact, \cite[p.~108]{Rabern-Rabern} has
the pre-condition that \emph{``the puzzle places no restrictions
on the type of yes-no questions to which the gods will grant an answer.''}
This is in contrast with the pre-condition introduced in this paper:
our axiomatic framework is built upon the concept of admissible questions and
the restriction to the two-response outcome of the gods inherent in Definition
\ref{admissible}. Furthermore, this approach is abstract in nature and independent
of the syntactic content of ``Da'' and ``Ja,'' and does not evaluate the puzzle in terms of ``difficulty.''
Hence, the modified and new puzzles and algorithms
presented in \cite{Rabern-Rabern} (and later on in \cite{Uzquiano} and \cite{Wheeler-Barahona}),
although very clever and interesting in their own right,
fall outside the scope of this work. On the other hand, we believe that our approach can be applied to Roberts'
puzzles found in \cite[p.~612]{Roberts}. Up to the best of our knowledge, the two new puzzles
in \cite{Roberts} and their
logical consequences have not been formally addressed in the
literature from neither an axiomatic nor a stochastic point of view.

\section{Computing the probabilities}\label{chances}

We have demonstrated that Boolos' puzzle can deterministically be solved in no less than three admissible questions.
Now, we direct our attention to the second horn of our original concern,
which is to compute the probabilities of identifying the gods without asking any question,
by asking either one or two yes-no admissible questions, respectively. This is the first time a stochastic
approach to Boolos' original puzzle has been considered.

\begin{thmA}\label{thm:probabilities}
Consider Boolos' Hardest Logic Puzzle Ever.
\begin{enumerate}[$(1)$]
\item The event
$$\big\{\hbox{solve The Hardest Logic Puzzle Ever without asking any admissible questions}\big\}$$
occurs with probability $1/6$.
\item The event
$$\big\{\hbox{solve The Hardest Logic Puzzle Ever by asking one admissible question}\big\}$$
also occurs with probability $1/6$.
\item The event
$$\big\{\hbox{solve The Hardest Logic Puzzle Ever by asking two admissible questions}\big\}$$
occurs with probability $1/3$.
\end{enumerate}
\end{thmA}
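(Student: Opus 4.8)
The plan is to fix a clean probability space and then reduce all three items to a single computation. I would model a trial as a pair: an assignment of the roles $(T,F,R)$ to $(A,B,C)$, i.e.\ one of the six scenarios $S1$--$S6$ of Table 1, which I take to be equiprobable with probability $1/6$, together with the independent fair outcomes of Random's hidden coin for each question actually addressed to it. ``Solve'' means that, after the allotted number of admissible questions and their answers, we output a scenario that is correct; when the transcript does not single out a scenario, the output is the posterior-maximizing guess. Item $(1)$ is then immediate: with no questions the posterior is uniform over six scenarios, so any declaration is correct with probability exactly $1/6$.

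For $(2)$ and $(3)$ the observation I would isolate first is that identifying a scenario factors into two essentially independent sub-problems: (a) \emph{locating} Random among $\{A,B,C\}$, and (b) \emph{ordering} the remaining two roles True and False. I would then argue that sub-problem (b) can never be settled by admissible questions: since, as the whole paper insists, the meaning of ``Da'' and ``Ja'' is irrelevant, interchanging the convention for the two words simultaneously interchanges the responses of a truth-teller and a liar, so every attainable transcript of answers is invariant under swapping the True/False labels. Conditioned on any transcript the True/False ordering therefore stays uniform and contributes an independent factor $1/2$. Granting this, the computation collapses to $\Pr(\text{solve})=\tfrac12\cdot\Pr(\text{locate Random})$, and it remains only to evaluate $\Pr(\text{locate Random})$ after $0$, $1$, and $2$ questions, which I would claim equals $1/3$, $1/3$, $2/3$ respectively. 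With no question the guess is uniform over three positions, giving $1/3$ and recovering $(1)$ as $\tfrac12\cdot\tfrac13=\tfrac16$. For two questions I would exhibit an explicit strategy attaining $2/3$: ask $A$ an admissible question that a non-Random god is forced to answer with one fixed word while Random is not; if that telltale word appears, declare $A=R$; otherwise ask $B$ the same and, failing the telltale word again, declare $C=R$. A case split over who is truly Random (a reliable god is forced, Random replies either way with probability $1/2$) gives $\tfrac12\cdot\tfrac13+\tfrac12\cdot\tfrac13+1\cdot\tfrac13=\tfrac23$, whence $(3)=\tfrac12\cdot\tfrac23=\tfrac13$.

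The single-question case of $(2)$ I would handle in the spirit of Lemma \ref{lemma} and Theorem \ref{thm:primero}: a lone admissible answer carries one bit, and because Random can never be exposed without a cross-check — one answer from a possibly-Random god is consistent with every position, and a single ``Da''/``Ja'' cannot even be interpreted as a yes or a no — the posterior on Random's location remains uniform, leaving $1/3$ and hence $1/6$ for $(2)$.

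The hard part will be the two matching upper bounds, carried out while correctly conditioning on Random's coin and allowing the second question to depend adaptively on the first answer, as Boolos' guideline $(b)$ permits. Here the clean information-counting of Lemma \ref{lemma} must be upgraded to the stochastic setting: the answers no longer partition the scenarios cleanly, since a Random god can land in either block, so I would instead track the joint law of (scenario, coins) across the at most four transcripts and verify that no assignment of transcripts to guesses beats the claimed value. Making the invariance in sub-problem (b) rigorous — that admissibility together with the irrelevance of ``Da''/``Ja'' genuinely forbids \emph{any} information about the True/False order — is the second delicate point, and it is exactly where the paper's reading of Boolos, rather than the Rabern--Rabern reading of Section \ref{relation}, is needed. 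The principal obstacle, however, will be reconciling the one- and two-question analyses under a \emph{single} notion of admissible question, so that the lone answer of $(2)$ provably yields no net gain (keeping $1/3$) while a second, telltale answer in $(3)$ genuinely raises the location probability to $2/3$; controlling precisely how much a single admissible reply can reveal is the crux on which the exact constants $1/6$, $1/6$, $1/3$ rest.
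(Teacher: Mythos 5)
Your factorization $\Pr(\text{solve})=\tfrac12\cdot\Pr(\text{locate Random})$ rests on a symmetry claim that is false, and this sinks the whole proposal. You assert that, since the meanings of ``Da'' and ``Ja'' are irrelevant, interchanging the two conventions interchanges the answers of a truth-teller and a liar, so that every attainable transcript is invariant under swapping the True/False labels and their order stays forever at posterior $1/2$. That invariance holds only for questions whose content does not mention the words themselves; it is exactly what \emph{self-referential (embedded) questions} defeat. Take the question used in the paper's own proof of part $(3)$: \emph{Does Da mean yes iff Rome is in Italy?} Checking the four cases (Da $=$ yes or Da $=$ no, respondent True or False), the biconditional cancels the unknown convention: True answers ``Da'' and False answers ``Ja'' \emph{under either meaning} of the words. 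So a single admissible question does settle your sub-problem (b) with probability $1$ --- this is the engine of Boolos' and Roberts' three-question solutions. Your claim also proves too much: if no transcript could ever break the True/False symmetry, the puzzle could not be solved in three questions either, contradicting the very premise of Theorem \ref{thm:primero}. The same oversight invalidates your one-question step: asking $A$ \emph{Does Da mean yes iff you are True?} forces both True and False to answer ``Da'' while Random answers ``Ja'' with probability $1/2$, so an answer of ``Ja'' exposes $A$ as Random and an answer of ``Da'' tilts the posterior on Random's location away from uniform; your claim that one admissible answer leaves that posterior at $(1/3,1/3,1/3)$ is therefore wrong. (Ironically, this is the very telltale question your two-question strategy in $(3)$ needs, so your $2/3$ computation there is fine --- but multiplying it by an ``irreducible'' factor $1/2$ is unjustified.)

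The paper's route is different and uses no such invariance: it conditions on whether the god addressed first is Random (probability $1/3$, scenarios $S5$--$S6$ of Table 1) or not (probability $2/3$, scenarios $S1$--$S4$), and multiplies probabilities along each branch, in part $(3)$ deploying the Rome/Italy question above to identify a god as True or False with certainty --- obtaining $(1/3)\times(1/2)=1/6$ and $(2/3)\times(1/4)=1/6$ in part $(2)$, and $(1/3)\times 1=1/3$ and $(2/3)\times 1\times(1/2)=1/3$ in part $(3)$. Any repair of your argument must abandon the product decomposition into ``locate Random'' and ``order True/False'': admissible self-referential questions transfer information about both sub-problems at once, which is precisely what your factorization forbids, so you are forced back to tracking the posterior over the six scenarios branch by branch, as the paper does.
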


It may seem counterintuitive but, surprisingly enough,
no matter what one actually asks, the probability to find all gods' identities with only one single admissible
yes-no question is the same that we have to find them without even asking any questions at all!
Rolling a dice could save us some energy and effort.

\begin{proof}[Proof of Theorem \ref{thm:probabilities}]
For $(1)$, it is clear that there is a probability of $1/6$ of solving the puzzle without even asking a question.
Indeed, we can simply roll a 6-faces uniform fair dice, observe the outcome $X$, and then pick the corresponding scenario $SX$
from Table 1.

Let us consider $(2)$. Without loss of generality, we address the only admissible question we have available to god $A$.
There are two possibilities. Either we addressed our question to Random
(with probability $1/3$, and we are in scenarios $S5$--$S6$) or we addressed our question to a god
that is not Random (with probability $2/3$, and we are in scenarios $S1$--$S4$).
Notice that after this has happened, we have no questions left. In the first case, where $A$ is Random, there is a fifty/fifty
chance to fall under either scenario $S5$ or scenario $S6$. Hence, since we started with a probability
of $1/3$ of the question being addressed to Random, and after the first admissible yes-no question
we are left with a $1/2$ probability of deciding the correct scenario, the total probability is then
$(1/3)\times(1/2)=1/6$. In the second case, that of $A$ being non-Random with probability $2/3$, there is a
$1/4$ probability that one and only one of scenarios $S1$--$S4$ occurs. Therefore, the total probability
to find the identities of each god in one question in this second case is $(2/3)\times(1/4)=1/6$.

Finally, let us show $(3)$, namely, that there is a probability of $1/3$ to determine the gods' identities
in only two admissible yes-no questions. Independently of the first admissible question, that without loss of generality
we address to $A$, and its answer, $A$ is either
Random with probability $1/3$ or not-Random with probability $2/3$.
In the first case, that is, $A$ being Random,
we are again in scenarios $S5$ or $S6$. We can use the admissible second question to further decide in an absolute, 
deterministic way whether we are actually situated in either scenario $S5$ or scenario $S6$.
Indeed, ask god $B$ the question \textit{Does da mean yes iff Rome is in Italy?}
God $B$ will answer Da if and only if $B$ is True, and will answer Ja if and only if $B$ is False.
Therefore, the total probability for this case is $(1/3)\times1=1/3$.
In the second case, we fall under scenarios $S1$--$S4$, where we know that $A$ is not random.
By using the same admissible question as before, we identify $A$ as either True or False in a deterministic way.
This means that we can narrow down the space of epistemic possibilities to either $S1$--$S2$ or $S3$--$S4$.
Say we are situated in the space $S1$--$S2$. Then, there is a probability of $1/2$ of having either $S1$ or $S2$
as the correct identification scenario. (The same actually holds for the other space).
Hence, the total probability is $(2/3)\times1\times(1/2)=1/3$. QED.
\end{proof}

Our proof of part $(3)$ of Theorem \ref{thm:probabilities} is consistent with
the original strategy of Boolos: ``Your first move is to find a god who you can be certain is not Random'' \cite{Boolos}.
After that you ask to the not-Random god the admissible question \textit{Does Da mean yes iff Rome is in Italy?}
to decide if you are talking to god True or god False.

It is a remarkable fact that, independently of having a static or dynamic set of admissible questions,
the event $\{\hbox{identify the gods in 3 admissible questions}\}$ occurs with probability $1$,
while $\{\hbox{identify the gods in less than 3 admissible questions}\}$ has probability strictly less than $1$.
It is impossible to solve Boolos' original puzzle in two admissible questions in a deterministic way.
Notice, however, that once the event $\{\hbox{identify the gods in 2 admissible questions}\}$ has occurred
(and this would happen $1$ out of $3$ times, as part $(3)$ of Theorem \ref{thm:probabilities} showed),
the gods have definitely been identified, with absolute certainty.

One can also think about whether a yes-no question is admissible or not,
but now from a stochastic point of view. Though we believe this topic is deserving of deeper analysis, we just mention here a simple
observation. Say we consider question \eqref{Q} as a first question.
Flipping the coin in its head means that Random is speaking truly with probability $1/2$.
Then \eqref{Q} is not an admissible question not only for god True, but also for god Random.
So the probability that \eqref{Q} is an admissible question when posed to any of the gods in the first step
of our algorithm to solve the puzzle is $(1/2)\times(1/3)=1/6$. On the other hand, Random speaks falsely with probability $1/2$.
In this case \eqref{Q} is not admissible only for god True. Thus the probability that \eqref{Q}
is admissible in the first step of a solution algorithm in this case is $(1/2)\times(2/3)=1/3$.
This problem will be addressed elsewhere.

\section{Conclusion}

This paper introduced the novel concept of admissible question for Boolos' Hardest Logic Puzzle Ever
and demonstrated via a formal, abstract mathematical argument, that the puzzle can
be deterministically solved by using no less than three admissible questions.
Deciding whether or not finding such questions --an algorithm-- has no effect in the
analysis nor in the proof. Furthermore, a new stochastic approach to Boolos' puzzle is considered and
the probabilities of solving the puzzle with less than three
admissible questions were computed. We believe that this formal mathematical setting
will be suited to rigorously interpret and analyze Robert's puzzles \cite[p.~612]{Roberts}.

Therefore, it has been demonstrated that a proper understanding of Boolos'
original purpose when proposing his puzzle and a correct interpretation of the
statement and its instructions are crucial to providing an adequate answer.
The satisfactory solution to the puzzle requires
considering the natural issue of the admissibility of the questions posed to the gods,
which definitively requires taking the inferential powers of the Law of Excluded Middle into account.
As mentioned above, addressing properly the puzzle necessitates close attention to Boolos'
requirements and intentions, specifically to the claim that the Law of Excluded Middle is a more adequate
explanatory source for certain ways of reasoning than the Law of Non-Contradiction.
Denying this might align any potential solution to an intuitionistic position,
potentially jeopardizing the efficacy of inductive reasoning, and
perhaps even falling short in adequately answering
questions regarding the sources of some of our beliefs. After all, as Boolos mentions
at the end of his now-classical article: \emph{``Mathematicians and philosophers have occasionally
attacked the idea that excluded middle is a logically valid law [\ldots] It is clear from The Hardest Logic Puzzle Ever
[\ldots] that our ability to reason about alternative possibilities, even in everyday life, would be almost completely
paralyzed were we to be denied the use of the law of excluded middle.''} \cite[p.~65]{Boolos}

This is the main difference between what the present article evidences as the adequate answer
within the axiomatic framework of admissible questions
to the puzzle proved in Theorem \ref{thm:primero} and any other potential solution, mostly any solution invested in reducing the richness
of our logical inferential rules to a unique logical mechanism, such as \textit{Reductio ad Absurdum}.



\end{document}